\documentclass[reqno,11pt]{amsart}

\usepackage{amsthm}
\usepackage{amssymb}
\usepackage{amsmath}
\usepackage{amsfonts}

\usepackage[utf8]{inputenc}
\usepackage[english]{babel}

\usepackage{microtype}      \usepackage[T1]{fontenc}
\usepackage{textcomp}
\usepackage{mathptmx}

\usepackage{bookmark}

\usepackage{xcolor}
\usepackage{hyperref}

\hypersetup{pdfauthor = {Maycon Sambinelli},pdftitle = {Seven colors suffice for proper conflict-free colorings of planar graphs},pdfkeywords = {proper conflict-free coloring, odd coloring, planar graph, graph minor},pdfproducer = {Latex with hyperref},pdfcreator = {pdflatex},colorlinks,linkcolor = {red!60!black},citecolor = {green!60!black},urlcolor = {blue!60!black}}

\makeatletter
\let\origsection=\section \def\section{\@ifstar{\origsection*}{\mysection}}
\def\mysection{\@startsection{section}{1}\z@{.7\linespacing\@plus\linespacing}{.5\linespacing}{\normalfont\scshape\centering\S}}
\makeatother

\usepackage[abbrev,msc-links,backrefs]{amsrefs}
\usepackage{doi}

\renewcommand{\PrintDOI}[1]{\doi{#1}}

\usepackage{datetime}

\usepackage{geometry}
\newtheorem{theorem}[equation]{Theorem}
\newtheorem{lemma}[equation]{Lemma}

\newtheorem{corollary}[equation]{Corollary}

\theoremstyle{definition}

\numberwithin{equation}{section}

\colorlet{tn/color/defi}{red!50!black}

\newcommand{\chipcf}{\chi_{\rm pcf}}
\newcommand{\chio}{\chi_{\rm o}}

\begin{document}

  \title{Proper conflict-free 7-coloring of planar graphs}

  \author{A. Jiménez, C. N. Lintzmayer, and M. Sambinelli}

  \date{\today}

  \address[A. Jiménez]{Instituto de Ingeniería Matemática -- CIMFAV, Facultad de Ingeniería, Universidad de valparaíso, Valparaíso, Chile}
  \email{andrea.jimenez@uv.cl}

  \address[C. N. Lintzmayer]{Centro de Matemática, Computação e Cognição -- Universidade Federal do ABC, Santo André, Brazil}
  \email{carla.negri@ufabc.edu.br}

  \address[M. Sambinelli]{Centro de Matemática, Computação e Cognição -- Universidade Federal do ABC, Santo André, Brazil}
  \email{m.sambinelli@ufabc.edu.br}

  \begin{abstract}
    A proper conflict-free coloring is a proper vertex coloring in which every
    nonisolated vertex has a color occurring uniquely in its open
    neighborhood.
    We prove that every graph with neither a $K_5$-minor nor a $Q_6$-minor
    admits such a coloring with at most seven colors, where $Q_6=K_3\vee\overline{K_3}$.
    In particular, this improves the previous general upper bound of eight for
    planar graphs.
    The proof combines a previously developed iterated distance-three
    selector construction with a general anchor-contraction lifting principle.
    The first supplies independently colored witnesses in closed neighborhoods,
    while the second combines those witnesses with a proper coloring of a
    suitable minor.
    We also develop the parity analogue of the first mechanism and show that,
    whenever the $K_{k+1}$ case of Hadwiger's conjecture holds, every
    $K_{k+1}$-minor-free graph can be proper vertex colored with $2k-1$ colors such that every nonisolated vertex has a color occurring an odd number of times in its open neighborhood.
  \end{abstract}

  \maketitle

\section{Introduction}\label{sec:introduction}

Throughout this paper, all graphs are finite and simple.
For a vertex $v$ of a graph $G$, let $N_G(v)$ and $N_G[v]$ denote its open
and closed neighborhoods, respectively; for $D\subseteq V(G)$, put
$N_G[D] = \bigcup_{v\in D} N_G[v]$.
Also, let $[k] = \{1,\ldots,k\}$.
For vertex-disjoint graphs $G$ and $H$, their join $G\vee H$ is obtained
from their disjoint union followed by adding every edge between $V(G)$ and $V(H)$.

A coloring $c$ of $G$ is \emph{proper conflict-free} if it is proper and, for
every nonisolated vertex $v$, some color occurs exactly once in $N_G(v)$.
The least number of colors in such a coloring is the \emph{proper
conflict-free chromatic number} of $G$, denoted by $\chipcf(G)$.
This notion, introduced by Fabrici, Lu\v{z}ar, Rindo\v{s}ov\'{a}, and
Sot\'{a}k~\cite{FaLuRiSo23}, strengthens proper coloring while retaining the
local identification condition from conflict-free coloring.
The latter condition originated in frequency-assignment problems and has
since been studied in geometric and graph-theoretic
settings~\cite{EvLoRoSm03}.

Replacing uniqueness by parity gives the related notion of an \emph{odd
coloring}: a proper coloring in which, for every nonisolated vertex
$v$, some color occurs an odd number of times in $N_G(v)$.
Odd colorings were introduced by Petru\v{s}evski and
\v{S}krekovski~\cite{PeSk22}.
The minimum number of colors in a proper odd coloring is denoted by
$\chio(G)$.
Since a color that occurs exactly once also occurs an odd number of times, we
have
\begin{equation*}
  \chi(G)\le \chio(G)\le \chipcf(G)
\end{equation*}
for every graph $G$.

Proper conflict-free colorings of planar graphs were first studied by
Fabrici~\textit{et al.}~\cite{FaLuRiSo23}, who proved that every planar graph 
admits a proper conflict-free coloring with at most eight colors.
They also constructed a planar graph with no proper conflict-free
$5$-coloring and conjectured that six colors always suffice.
Subsequent work established stronger bounds for several sparse subclasses of
planar graphs, described in terms of girth or maximum average
degree~\cite{CaPeSk23}.
In particular, Cho~\textit{et al.}~\cite{ChChKwPa25} proved that every planar graph
of girth at least five is proper conflict-free $7$-colorable.
For general planar graphs, however, the gap between the lower bound~$6$ and
the upper bound~$8$ remained open.

A graph~$H$ is a \emph{minor} of a graph~$G$ if~$H$ can be obtained from~$G$
by deleting edges and/or vertices, and by contracting edges.
Very recently, Sharma, Paul, and Pandey~\cite{ShPaPa26} obtained an
$O(t\log\log t)$ upper bound for the proper conflict-free chromatic number of
$K_t$-minor-free graphs, improving the previously known quadratic dependence
on~$t$.
Their result applies for all~$t$, but does not subsume the explicit bounds
obtained here in the known range of Hadwiger's conjecture.
Our main result improves the general planar bound and holds for a larger
minor-closed class.

\begin{theorem}\label{thm:minor-main}
  Every graph~$G$ with neither a $K_5$-minor nor a $Q_6$-minor satisfies
  \begin{equation*}
    \chipcf(G)\le 7.
  \end{equation*}
\end{theorem}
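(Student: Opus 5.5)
The plan follows the two mechanisms announced in the abstract: an iterated distance-three selector, and an anchor-contraction lifting. Assume $G$ is connected with no isolated vertices; isolated vertices can receive any color.

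\textbf{Selector.} First, we build a set $D\subseteq V(G)$ with three properties:
\begin{enumerate}
\item[(a)] any two vertices of $D$ are at distance at least three in $G$;
\item[(b)] every vertex $v$ has a designated witness $w(v)\in N_G(v)$, chosen as a neighbor of an anchor $d\in D$ with $v\in N_G[d]$, or equal to the anchor itself;
\item[(c)] the witnesses are arranged so that $w(v)$ can be forced to be the unique vertex of its color in $N_G(v)$.
\end{enumerate}
We iterate greedily. Pick a vertex $d$ whose closed neighborhood meets no previously chosen ball. Add it to $D$ and fix, for each vertex of $N_G[d]$, a witness inside $N_G[d]$. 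Repeat on the vertices not yet covered. Because $D$ is a distance-three packing, the balls $N_G[d]$ for $d\in D$ are pairwise disjoint and nonadjacent except through edges between different balls. Witnesses in different balls can therefore be colored independently.

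\textbf{Lifting.} Next, contract each ball $N_G[d]$, or more precisely each star centered at $d$ minus the witness set, to a single vertex. This gives a minor $H$ of $G$. Since the class of graphs with neither a $K_5$-minor nor a $Q_6$-minor is minor-closed, $H$ also has neither minor. By the Four Colour Theorem extended to $K_5$-minor-free graphs (Wagner's structure theorem), $\chi(H)\le 4$, and we use colors $\{1,2,3,4\}$ on $H$. The three remaining colors $\{5,6,7\}$ are reserved for the witnesses. Uncontracted vertices keep their color from $H$; the vertices inside a contracted anchor set inherit the anchor's color wherever this stays proper. Then we recolor the witnesses with colors from $\{5,6,7\}$ so that:
\begin{enumerate}
\item[(i)] the coloring stays proper;
\item[(ii)] for each $v$, its witness $w(v)$ has a color that appears exactly once in $N_G(v)$.
\end{enumerate}
This is a local constraint problem. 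Each witness color must differ from the colors of the other witnesses in the relevant neighborhoods. Because of the distance-three separation, these conflicts only involve witnesses in nearby balls.

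\textbf{Where $Q_6$-freeness enters, and the main obstacle.} The hard step is guaranteeing that three reserved colors suffice for the witnesses. We need the auxiliary conflict graph on witnesses, where two witnesses are adjacent if they must receive different colors, to be $3$-colorable. I would show this conflict graph, with the anchors contracted, is again a minor of $G$, modulo a bounded-size local gadget. A $K_4$ in the conflict structure, together with the anchor triangle, would produce $K_3\vee\overline{K_3}=Q_6$ as a minor: three mutually adjacent anchors, each joined to three independent witnesses. Excluding $Q_6$ should therefore force $3$-degeneracy, or at least $3$-colorability, of the conflict graph. I would first try proving $3$-colorability directly by induction on $|D|$, contracting one ball at a time; if that fails, I would fall back to a discharging-free argument via Wagner's decomposition into planar pieces and $V_8$ glued along cliques of size at most $3$. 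The remaining verifications are routine:
\begin{itemize}
\item properness across witness/non-witness edges, which holds because the palettes are disjoint;
\item uniqueness of $w(v)$'s color in $N_G(v)$ for non-witness vertices $v$;
\item the case where a vertex is its own anchor.
\end{itemize}
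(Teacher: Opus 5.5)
There is a genuine gap, and it lies in both halves of your plan.

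\textbf{The selector step.} It cannot work as stated. Property (b) requires every vertex to lie in $N_G[d]$ for some $d\in D$, so $D$ would have to be a perfect code. A distance-three packing in general leaves vertices at distance exactly $2$ from $D$ uncovered: in $C_4$, $D$ is a single vertex and the opposite vertex is uncovered. ``Repeat on the vertices not yet covered'' cannot add anything to $D$ without violating (a). If you instead mean to iterate on the remainder, vertices chosen in different rounds can be at distance $2$, and your claim that witnesses in different balls can be ``colored independently'' collapses.

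The missing idea is how the iteration interacts with the excluded minors. Build $D$ in each component by repeatedly adding a vertex at distance exactly $3$ from the current set. Then $N_G[D]$ induces a connected graph that dominates $R=G-N_G[D]$. Consequently, any $J$-minor of $R$ yields a $K_1\vee J$-minor of $G$. So $R$ excludes $K_4$ and $Q_5$, and the next remainder excludes $K_3$ and $Q_4=K_{1,3}$. That remainder is therefore a disjoint union of paths, which has a perfect code. Giving the packing of level $i$ the color $i$ produces an independent set $S$ and $\varphi:S\to[3]$ such that every closed neighborhood contains some color of $\varphi$ exactly once. This is the only place $Q_6$-exclusion is used; it has nothing to do with a witness conflict graph.

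\textbf{The coloring step.} This also fails. Contracting a star centered at $d$ and letting its vertices inherit one color is never proper, since $d$ is adjacent to every other vertex of its star. The claim that the witness conflict graph is $3$-colorable is unsupported. Excluding $K_4$ as a subgraph does not bound the chromatic number by $3$, and being a minor of $G$ only gives $4$ colors.

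The paper needs no conflict graph because it reverses the roles of the two palettes. The three reserved colors go to $S$ via $\varphi$. This directly gives a unique witness to every $u\notin S$, since $N_G[u]\cap S=N_G(u)\cap S$. For each nonisolated $s\in S$, one contracts only a single edge $sa_s$ with $a_s\in N_G(s)$. The resulting minor $H$, with vertex set $V(G)\setminus S$, is $K_5$-minor-free and hence $4$-colorable. Using that coloring on $V(G)\setminus S$ is proper in $G$. It also makes the color of $a_s$ unique in $N_G(s)$, because every other neighbor of $s$ is adjacent to $a_s$ in $H$.
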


\begin{corollary}\label{thm:main}
  Every planar graph $G$ satisfies
  \begin{equation*}
    \chipcf(G)\le7.
  \end{equation*}
\end{corollary}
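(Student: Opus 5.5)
The corollary should follow directly from Theorem~\ref{thm:minor-main}. All that is needed is that planar graphs are closed under taking minors and that neither $K_5$ nor $Q_6=K_3\vee\overline{K_3}$ is planar. Then a planar $G$ has no $K_5$-minor and no $Q_6$-minor, and Theorem~\ref{thm:minor-main} gives $\chipcf(G)\le 7$.

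Minor-closedness of planarity is standard. Deleting a vertex or an edge from a plane drawing leaves a plane drawing. Contracting an edge $uv$ can also be done in the plane: pull $v$ along the curve representing $uv$ into $u$, rerouting the other edges at $v$ along that curve, and then delete any parallel edges and loops this creates. So every minor of a planar graph is planar, and it remains to check that $K_5$ and $Q_6$ are nonplanar.

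$K_5$ is nonplanar by Kuratowski's theorem, or by Euler's bound: it has $10>3\cdot 5-6$ edges. For $Q_6$ I will exhibit $K_{3,3}$ as a subgraph. Write $V(K_3)=\{a_1,a_2,a_3\}$ and $V(\overline{K_3})=\{b_1,b_2,b_3\}$. The join contains every edge $a_ib_j$, so after deleting the three edges of the triangle on the $a_i$ we are left with exactly $K_{3,3}$. Since $K_{3,3}$ is nonplanar, so is $Q_6$.

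Alternatively, Euler's formula settles $Q_6$ at once: it has $3+9=12$ edges on $6$ vertices, and $12\le 3\cdot 6-6=12$ is not violated. The join does contain the bipartite $K_{3,3}$, and $9>2\cdot 6-4=8$ shows $K_{3,3}$ is nonplanar, which gives the same conclusion. Either route works. None of these steps is a real obstacle: all the substance is in Theorem~\ref{thm:minor-main}, and the corollary is a routine check on the excluded minors.
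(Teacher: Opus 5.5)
Your proposal is correct and follows the paper's proof: planarity is minor-closed, $K_5$ is nonplanar, and $K_{3,3}$ is a spanning subgraph of $Q_6$, so Theorem~\ref{thm:minor-main} applies. One small slip: your ``alternative'' paragraph says Euler's bound settles $Q_6$ at once, but as you note yourself, $12\le 3\cdot 6-6$, so that bound is not violated; only the bipartite bound $9>2\cdot 6-4$ for $K_{3,3}$ does the work there.
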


In view of $\chio(G)\le\chipcf(G)$, Corollary~\ref{thm:main} also shows
that every planar graph has odd chromatic number at most~$7$, improving
the general upper bound of~$8$ due to Petr and Portier~\cite{PePo23}.

Our proof refines and combines two constructions from
Abel~\textit{et al.}~\cite{AbAlDeFeGoHeKeSc18}.
Their iterated elimination of distance-three sets produces the independent
closed-neighborhood selector used here, while their open-neighborhood
argument uses the same anchor-contraction device for obtaining unique
witnesses from a proper coloring of a minor.
We recast the first construction in selector language and formulate the
second as a general lifting principle for arbitrary admissible
multiplicities~$\Lambda$.
Combining the resulting selector with the Four Color Theorem yields the
bound $3+4=7$.

More precisely, Theorem~3.5 of Abel~\textit{et al.}~\cite{AbAlDeFeGoHeKeSc18},
in the equivalent independent-selector form recorded below, shows that
excluding $K_{k+2}$ and the near-complete graph $Q_{k+3}$ as minors produces
a $k$-color selector with a unique witness in every closed neighborhood.
Excluding~$K_5$ and~$Q_6$ allows us to take $k=3$, while the anchor minor
remains $K_5$-minor-free and hence $4$-colorable.
For the parity setting, we establish an analogous selector theorem requiring
only the exclusion of $K_{k+2}$.
Combined with Hadwiger's conjecture, it gives the bound~$2k-1$ for
$K_{k+1}$-minor-free graphs; the known cases yield new consequences for
graphs excluding~$K_5$ and~$K_6$ as minors.

Section~\ref{sec:selectors} develops the two types of closed-neighborhood
selector.
Section~\ref{sec:anchor-lifting} proves the anchor-lifting principle and
applies it first to planar proper conflict-free coloring and then to odd
coloring under complete-minor exclusions.

\section{Closed-neighborhood selectors}\label{sec:selectors}

Let $\Lambda$ be a set of positive integers with $1\in\Lambda$.
A \emph{$\Lambda$-selector of width $p$} in a graph $G$ is a pair
$(S,\varphi)$ consisting of an independent set $S\subseteq V(G)$ and a map
$\varphi:S\to[p]$ such that, for every $x\in V(G)$, there is a color
$i\in[p]$ for which
\begin{equation}\label{eq:selector}
  \bigl|N_G[x]\cap\varphi^{-1}(i)\bigr|\in\Lambda.
\end{equation}
If~\eqref{eq:selector} is required only for
$x\in V(G)\setminus S$, then $(S,\varphi)$ is called an
\emph{external $\Lambda$-selector}.
Under our standing assumptions the two notions coincide.
Indeed, if $s\in S$, then independence of~$S$ gives
$N_G[s]\cap S=\{s\}$, so the color $\varphi(s)$ occurs exactly once in
$N_G[s]$, and $1\in\Lambda$.
We use
\begin{equation*}
  \Lambda_{\mathrm{cf}}=\{1\}
  \qquad\text{and}\qquad
  \Lambda_{\mathrm{odd}}=\{1,3,5,\ldots\}.
\end{equation*}
Note that the selector property is componentwise: selectors of width~$p$ 
in the components can be combined into a selector of width~$p$ for 
the whole graph.

The next lemma isolates the geometric reduction underlying the iterated
distance-three construction of Abel~\textit{et al.}~\cite{AbAlDeFeGoHeKeSc18}.

\begin{lemma}
\label{lem:distance-three-reduction}
  Every nonempty connected graph $G$ contains a set $D\subseteq V(G)$ such
  that, with
  \begin{equation*}
    C=G[N_G[D]]    \qquad\text{and}\qquad   R=G-N_G[D],
  \end{equation*}
  the following properties hold:
  \begin{enumerate}
    \item distinct vertices of $D$ are at distance at least $3$;
    \item the graph $C$ is connected;
    \item $|N_G[x]\cap D|=1$ for every $x\in N_G[D]$;
    \item every vertex of $R$ has a neighbor in $C$ (thus $N_G[D]$ is a
              connected dominating set of $G$); and
    \item if a graph $J$ is a minor of $R$, then $K_1\vee J$ is a minor of
          $G$.
  \end{enumerate}
\end{lemma}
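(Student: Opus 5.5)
The plan is to build $D$ greedily, always adding a new vertex at distance exactly $2$ from the current closed neighborhood. Let $G$ be nonempty and connected, and start with $D=\{v\}$ for an arbitrary vertex $v$. While some vertex $w$ satisfies $\operatorname{dist}_G(w,N_G[D])=2$, add such a $w$ to $D$; then repeat. Since $V(G)$ is finite, the process terminates. We then verify the five properties for the final $D$ by induction over the steps, or directly.

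For (1), observe that $\operatorname{dist}(w,N_G[D])=2$ means $w$ is neither in nor adjacent to $N_G[D]$. Hence $\operatorname{dist}(w,d)\ge 3$ for every $d\in D$, and the pairwise-distance condition is preserved at each step.

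For (2), the base $N_G[v]$ is connected. When $w$ is added, there is a vertex $u\notin N_G[D]$ with $u\in N_G(w)$ and $u$ adjacent to some vertex of $N_G[D]$. The new set $N_G[w]$ induces a connected star, and the edge from $u$ to $N_G[D]$ joins it to the old connected piece, so $G[N_G[D]]$ stays connected.

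For (3), suppose some $x\in N_G[D]$ lay in $N_G[d]\cap N_G[d']$ for distinct $d,d'\in D$. Then $\operatorname{dist}(d,d')\le 2$, contradicting (1). Membership in $N_G[D]$ gives at least one such $d$.

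For (4), at termination no vertex is at distance exactly $2$ from $N_G[D]$. Since $G$ is connected, distances from the nonempty set $N_G[D]$ take every value from $0$ up to their maximum. So every vertex is at distance at most $1$ from $N_G[D]$, i.e.\ every vertex of $R$ has a neighbor in $C$.

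For (5), contract the connected subgraph $C$, using (2), to a single vertex $z$; the result is a minor of $G$. By (4), $z$ is adjacent to every vertex of $R$, so this minor contains $K_1\vee R$ as a subgraph. Now perform on $R$, inside this graph, the deletions and contractions that produce $J$ from $R$. Adjacency to $z$ is inherited by branch sets, so the result contains $K_1\vee J$. Hence $K_1\vee J$ is a minor of $G$.

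No step looks like a genuine obstacle. The only point needing care is choosing the growth rule, distance exactly $2$ from $N_G[D]$, so that (1) and (2) hold simultaneously and termination yields domination in (4).
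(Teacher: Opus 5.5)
Your proof is correct and is essentially the paper's argument: adding a vertex at distance exactly $2$ from $N_G[D]$ is the same as adding one at distance exactly $3$ from $D$, and your checks of (1)--(5), including the connecting edge from the intermediate vertex and the contraction of $C$ for (5), match the paper's. The only difference is cosmetic: you use the ``distances take every intermediate value'' observation in (4), whereas the paper uses it to show that a vertex at distance exactly $3$ exists whenever one at distance at least $3$ does.
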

\begin{proof}
  Start $D$ with an arbitrary vertex of $G$ and, while there is a vertex at
  distance at least $3$ from the current set $D$, add a vertex at distance
  precisely $3$ from $D$.
  Such a vertex can be found on a shortest path from $D$ to any vertex at
  distance at least~$3$.
  Thus, distinct vertices of the resulting set $D$ are at distance at least~$3$,
  and every vertex of~$G$ is at distance at most~$2$ from~$D$.

  The graph $C$ is connected.
  Indeed, whenever a new vertex~$d$ is added, there is a path
  $(d,a,b,d')$ of length~$3$ to a previously chosen vertex $d'\in D$.
  The edge $ab$ joins $G[N_G[d]]$ to the subgraph induced by the closed
  neighborhoods of the vertices chosen earlier.

  Every $x\in N_G[D]$ has a vertex of $D$ in its closed neighborhood.
  It cannot have two, since two such vertices would be at distance at most
  $2$.
  This proves the third property.
  Every vertex of $R$ is at distance exactly $2$ from $D$, so it has a
  neighbor in $C$.

  Finally, suppose that $R$ contains $J$ as a minor.
  Contract the connected graph $C$ to a single vertex.
  Since every vertex of $R$ has a neighbor in $C$, the contracted vertex is
  adjacent to every branch set in a minor model of $J$ in $R$.
  These branch sets together with the contracted vertex form a
  $K_1\vee J$ minor of $G$.
\end{proof}

The next lemma isolates the common inductive lifting step.

\begin{lemma}\label{lem:selector-extension}
  Let $G$ be a graph, and let $D\subseteq V(G)$ be a set whose distinct
  vertices are pairwise at distance at least $3$, and suppose that
  $|N_G[x]\cap D|=1$
  for every $x\in N_G[D]$.
  Put $R=G-N_G[D]$.
  If $R$ has a $\Lambda$-selector of width $p$, then $G$ has a
  $\Lambda$-selector of width $p+1$.
\end{lemma}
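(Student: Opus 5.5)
The natural construction is to take the selector $(S_R,\varphi_R)$ of width $p$ in $R$ and add $D$ as one new color class. Set
\begin{equation*}
  S = S_R\cup D, \qquad \varphi|_{S_R}=\varphi_R, \qquad \varphi|_D\equiv p+1 .
\end{equation*}
Two things must then be checked: that $S$ is independent in $G$, and that every vertex $x$ of $G$ sees some color with an admissible multiplicity in $N_G[x]$.

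\emph{Independence.} $S_R$ is independent in $R$. Since $R=G-N_G[D]$ is an induced subgraph, $S_R$ is also independent in $G$. $D$ is independent because its vertices are pairwise at distance at least $3$. Finally, no vertex of $S_R$ is adjacent to a vertex of $D$, since $S_R\subseteq V(R)$ avoids $N_G[D]$.

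\emph{Selector condition for $x\in N_G[D]$.} By hypothesis $|N_G[x]\cap D|=1$. The only vertices of color $p+1$ are those of $D$, so color $p+1$ occurs exactly once in $N_G[x]$, and $1\in\Lambda$.

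\emph{Selector condition for $x\in V(R)$.} By the selector property in $R$, there is $i\in[p]$ with $|N_R[x]\cap\varphi_R^{-1}(i)|\in\Lambda$. We claim
\begin{equation*}
  N_G[x]\cap\varphi^{-1}(i)=N_R[x]\cap\varphi_R^{-1}(i).
\end{equation*}
Indeed, $\varphi^{-1}(i)\subseteq S_R\subseteq V(R)$, and for $R$ induced we have $N_G[x]\cap V(R)=N_R[x]$. Hence the same color $i$ works in $G$.

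The only step needing care is this last one: the extra neighbours $x$ has in $G$, namely those in $N_G[D]\setminus D$ or in $D$, must not disturb the count of color $i$. The disjointness of the palettes handles this. Vertices outside $R$ are either uncolored or carry color $p+1\neq i$. Note also that the argument nowhere needs $x$'s neighbours in $D$ to be controlled, and in fact they cannot exist for $x\in V(R)$, since $V(R)$ avoids $N_G[D]$. This completes the plan: $(S,\varphi)$ is a $\Lambda$-selector of width $p+1$ in $G$.
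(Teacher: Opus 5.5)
Your proposal is correct and follows essentially the same route as the paper. You adjoin $D$ as a new color class $p+1$ to the selector of $R$, get independence from the distance-three packing plus the absence of edges between $D$ and $R$, and verify the selector condition separately on $N_G[D]$ and on $V(R)$ using that $R$ is an induced subgraph, exactly as in the paper's proof.
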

\begin{proof}
  Let $(S_R,\varphi_R)$ be a $\Lambda$-selector of width $p$ in $R$.
  Put $S=D\cup S_R$, retain $\varphi_R$ on $S_R$, and assign color $p+1$ to
  every vertex of $D$.
  The set $S$ is independent: the set $D$ is a distance-three packing,
  $S_R$ is independent, and no edge joins $D$ to $R$.

  If $x\in N_G[D]$, then the new color occurs exactly once in $N_G[x]$,
  and $1\in\Lambda$.
  If $x\in V(R)$, then
  \begin{equation*}
    N_G[x]\cap S_R=N_R[x]\cap S_R,
  \end{equation*}
  so the selector in $R$ supplies the required color.
\end{proof}

For $t\ge4$, let $Q_t$ be obtained from $K_t$ by deleting the three edges
among a fixed triple of vertices. Thus
\begin{equation}\label{eq:q-cone}
  Q_t=K_{t-3}\vee\overline{K_3},
  \qquad
  Q_4=K_{1,3},
  \qquad
  Q_{t+1}=K_1\vee Q_t.
\end{equation}

The next lemma is an equivalent independent-support reformulation of
Theorem~3.5 of Abel~\textit{et al.}~\cite{AbAlDeFeGoHeKeSc18}.
Their graph $K_{k+3}^{-3}$ is precisely $Q_{k+3}$, and the support produced
by their iterated distance-three construction is independent.
We include the proof to make the selector formulation explicit and because
the same inductive structure will be used for the parity selector.

\begin{lemma}\label{lem:stable-dominating}
  Let $k\ge1$.
  If a graph $G$ has neither a $K_{k+2}$-minor nor a $Q_{k+3}$-minor, then
  $G$ has a $\Lambda_{\mathrm{cf}}$-selector of width $k$.
\end{lemma}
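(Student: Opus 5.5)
Proceed by induction on $k$, using Lemmas~\ref{lem:distance-three-reduction} and~\ref{lem:selector-extension}. Since the selector property is componentwise, and the hypotheses pass to components (which are minors of $G$), assume $G$ is connected and nonempty. The empty graph is trivial: take $S=\emptyset$.

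For the base case $k=1$, $G$ has no $K_3$-minor and no $Q_4=K_{1,3}$-minor. A connected $K_3$-minor-free graph is a tree, and a tree with no $K_{1,3}$-minor has maximum degree at most $2$, so $G$ is a path. A path has a $\Lambda_{\mathrm{cf}}$-selector of width $1$: take an independent set $S$ with $|N[x]\cap S|=1$ for all $x$ (a perfect code). For example, on $v_1\cdots v_n$ take $v_2,v_5,v_8,\ldots$, and adjust at the end by including $v_n$ when $n\equiv 1\pmod 3$. Alternatively, apply Lemma~\ref{lem:distance-three-reduction} directly: on a path, $R$ must be empty, since $R$ nonempty would force a branching or a long enough structure. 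I will verify this small case by hand.

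For the inductive step $k\ge2$, apply Lemma~\ref{lem:distance-three-reduction} to get $D$, $C$, and $R=G-N_G[D]$. I claim $R$ has neither a $K_{k+1}$-minor nor a $Q_{k+2}$-minor. Suppose $J\in\{K_{k+1},Q_{k+2}\}$ is a minor of $R$. Then property~(5) makes $K_1\vee J$ a minor of $G$, and $K_1\vee K_{k+1}=K_{k+2}$ and $K_1\vee Q_{k+2}=Q_{k+3}$ by~\eqref{eq:q-cone}. This contradicts the hypothesis. By induction with $k-1\ge1$, $R$ has a $\Lambda_{\mathrm{cf}}$-selector of width $k-1$ (trivially so if $R$ is empty). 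Properties~(1) and~(3) are exactly the hypotheses of Lemma~\ref{lem:selector-extension}, which yields a selector of width $k$ in $G$.

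The main obstacle is the base case, since the inductive step is mechanical. A cleaner alternative is to start the induction at $k=0$, stating that a graph with no $K_2$-minor and no $Q_3$-minor has a width-$0$ selector. But a width-$0$ selector exists only for the empty graph, and $Q_3$ is not defined for $t\ge4$, so I would instead do $k=1$ directly. Concretely, I would show the connected graph is a path (or a single vertex) and exhibit the perfect code. One subtlety: a path $P_n$ has a perfect code only for suitable $n$. For example, $P_2$ works with $S=\{v_1\}$, and $P_4$ works with $S=\{v_1,v_4\}$. In general the code is $v_1$ or $v_2$ followed by every third vertex, and this always works for paths. Choose the start according to $n\bmod 3$: start at $v_2$ when $n\equiv 0,2$, and at $v_1$ when $n\equiv1$.
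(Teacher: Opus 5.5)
Your argument is correct and is essentially the paper's proof. It uses the same induction on $k$, the same base-case reduction to a path with the perfect code starting at $v_2$ (for $n\equiv0,2\pmod 3$) or $v_1$ (for $n\equiv1\pmod 3$), and the same inductive step via Lemma~\ref{lem:distance-three-reduction}, the identities $K_1\vee K_{k+1}=K_{k+2}$ and $K_1\vee Q_{k+2}=Q_{k+3}$, and Lemma~\ref{lem:selector-extension}.

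Do discard your two earlier base-case remarks, which are false. First, adding $v_n$ to $\{v_2,v_5,\ldots,v_{n-2}\}$ when $n\equiv1\pmod 3$ and $n\ge4$ gives $|N[v_{n-1}]\cap S|=2$. Second, $R$ need not be empty on a path: starting the construction at $v_1$ in $P_6$ leaves $R=\{v_6\}$, and then Lemma~\ref{lem:selector-extension} yields width at least $2$, not $1$.
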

\begin{proof}
  We proceed by induction on $k$.
  Since selectors combine componentwise, we may assume that $G$ is connected.

  Suppose first that $k=1$.
  Since $G$ has no $K_3$-minor, it is a tree.
  Since it has no $Q_4=K_{1,3}$ minor, it has maximum degree at most $2$ and
  hence is a path, say $(v_1,\ldots,v_n)$.
  If $n\equiv0,2\pmod3$, let $S$ consist of the vertices $v_j$ with
  $j\equiv2\pmod3$; if $n\equiv1\pmod3$, use instead the vertices $v_j$ with
  $j\equiv1\pmod3$.
  Every closed neighborhood contains exactly one vertex of $S$, so assigning
  color $1$ to $S$ proves the base case.

  Let $k\ge2$, and take $D,C,R$ from
  Lemma~\ref{lem:distance-three-reduction}.
  If $R$ contained a $K_{k+1}$-minor, then $G$ would contain
  $K_1\vee K_{k+1}=K_{k+2}$
  as a minor.
  Similarly, a $Q_{k+2}$-minor in $R$ would give a
  $K_1\vee Q_{k+2}=Q_{k+3}$ minor in $G$.
  Thus, the induction hypothesis gives a
  $\Lambda_{\mathrm{cf}}$-selector of width $k-1$ in $R$.
  Lemma~\ref{lem:selector-extension} lifts it to a selector of width $k$ in
  $G$.
\end{proof}

The parity base case is a known consequence of the existence of
odd-dominating sets; see Proposition~2.1 and the proof of Theorem~3.8 of
Caro, Petru\v{s}evski, and \v{S}krekovski~\cite{CaPeSk23b}.
We record it in the form needed here.

\begin{lemma}[\cite{CaPeSk23b}]
  \label{lem:forest-independent-odd-set}
  Every forest $F$ has an independent set $S\subseteq V(F)$ such that
  \begin{equation}\label{eq:forest-closed-odd}
    |N_F[v]\cap S| \equiv 1 \pmod 2
  \end{equation}
  for every $v\in V(F)$.
\end{lemma}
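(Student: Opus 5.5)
We prove the statement by induction on $|V(F)|$. Since the parity condition is local to each component, we may take $F$ to be a tree $T$. The case of a single vertex is immediate: take $S=V(T)$.

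For the inductive step, root $T$ at a vertex $r$ and pick a deepest leaf $\ell$. Let $u$ be its parent. If $u=r$, then $T$ is a star. Either $T=K_2$, in which case $S=\{\ell\}$ works, or $T$ is a star with center $u$ and leaf set $L$ of size at least $2$. In the star case we take $S=L$ when $|L|$ is odd. When $|L|$ is even we take $S=L\setminus\{\ell\}$ together with $\ell$ excluded: then $\ell$ itself sees no vertex of $S$, so that fails. We instead take $S=\{u\}$, which gives every vertex exactly one element of $S$.

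In general the plan is a greedy, bottom-up construction. We process the vertices in postorder, deciding for each vertex $v$ whether $v\in S$ once all its children are decided. We maintain the invariant that every processed non-root vertex $w$ either already has odd count $|N[w]\cap S|$ from itself and its children, or has even count but is allowed to be fixed by its parent. So the state of a child $c$ is one of the following:
\begin{itemize}
  \item \emph{satisfied}: the count from $c$ and its children is odd;
  \item \emph{needy}: the count is even and $c\notin S$, so it needs its parent in $S$;
  \item \emph{blocked}: the count is even and $c\in S$.
\end{itemize}
The blocked state must be avoided, since $c\in S$ forces the parent out of $S$ by independence.

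Everything then hinges on a careful choice at each vertex $v$. If some child is in $S$, then $v\notin S$ is forced. We must then require that each child $c\in S$ had odd count from its subtree alone, because its parent is excluded. We must also require that no child is needy, and that the parity of the number of children in $S$ determines the status of $v$.

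The main obstacle is that this naive state machine can get stuck. A vertex $v$ may have both a needy child and a child that is in $S$, and then no consistent choice exists at $v$. To resolve this I would strengthen the induction so that it offers flexibility. For each rooted subtree $T_v$ we keep track of which of several boundary types are realizable. The types are: $v\in S$ with odd internal count; $v\notin S$ and satisfied; and $v\notin S$ and needy. In each case every vertex strictly below $v$ must be satisfied. We then show by a parity argument that for every subtree at least one of the types ``$v\in S$'' or ``$v\notin S$ satisfied'' is realizable, and moreover that the needy type is realizable whenever the satisfied type is not. The combination step is then a small case analysis over the children's available types, flipping one child's choice where needed to fix the parity at $v$. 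If this bookkeeping becomes unwieldy, my fallback is the linear-algebra route in the spirit of Sutner's theorem. Solve $(A+I)x=\mathbf{1}$ over $\mathbb{F}_2$ on trees, then modify the solution to an independent one. The modification uses the local replacement $\{u,\ell\}\to\{\ell\}$ for adjacent pairs in $S$ and checks that parities are preserved. This independence repair is the step I would expect to be delicate.
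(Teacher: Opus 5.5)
Your proposal does not yet prove the lemma. The decisive step of the bottom-up route is only announced (``we then show by a parity argument\ldots''), and the invariant you announce cannot be carried through the induction as stated.

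Write $\alpha$, $\beta$, $\gamma$ for the types ``$v\in S$'', ``$v\notin S$, satisfied'', ``$v\notin S$, needy''. Type $\alpha$ at $v$ requires every child to realize $\gamma$. A choice with $v\notin S$ requires every child to realize $\alpha$ or $\beta$, and its type is $\beta$ or $\gamma$ according to the parity of the number of children placed in $S$. Your hypothesis is ``$\alpha$ or $\beta$, and $\gamma$ whenever not $\beta$''. This permits a child realizing only $\beta$. A vertex with exactly two such children then realizes only $\gamma$, so the hypothesis fails at that vertex. Such children never actually occur, but excluding them is exactly what needs proof.

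The invariant that does close is that at least two of $\alpha,\beta,\gamma$ are realizable:
\begin{itemize}
\item a leaf has $\{\alpha,\gamma\}$;
\item if some child realizes both $\alpha$ and $\beta$, its flexibility fixes the parity, so $v$ realizes $\beta$ and $\gamma$;
\item otherwise each child realizes exactly one of $\alpha,\beta$, and hence also $\gamma$. Then $v$ realizes $\alpha$, together with whichever of $\beta,\gamma$ the forced parity dictates.
\end{itemize}
At the root this gives $\alpha$ or $\beta$, which is the required set. With this invariant your route becomes a self-contained proof that avoids Sutner's theorem, which the paper relies on. Your opening deepest-leaf induction is abandoned after the star case and plays no role.

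Your fallback misses the observation that makes the paper's argument two lines: no repair is needed. Suppose $|N_F[v]\cap S|$ is odd for every $v$. Then each $v\in S$ has $d_{F[S]}(v)=|N_F[v]\cap S|-1$ even. A forest in which every degree is even is edgeless, because every nontrivial tree has a leaf. Hence every odd-dominating set of a forest, in particular the one given by Sutner's theorem, is already independent.

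The repair you propose would fail in any case. Replacing $\{u,\ell\}$ by $\{\ell\}$ deletes $u$ from $S$, which changes $|N_F[w]\cap S|$ by one, and so flips its parity, for every $w\in N_F[u]$. Moreover, the configuration it is meant to fix cannot arise. If $\ell$ is a leaf, then $u$ and $\ell$ cannot both lie in an odd-dominating set, since $\ell$ would see two of its vertices.
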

\begin{proof}
  By the odd-domination theorem of Sutner, $F$ has a set
  $S\subseteq V(F)$ such that $|N_F[v]\cap S|$ is odd for every
  $v\in V(F)$; see~\cite{CaPeSk23b}.
  For every $v\in S$,
  \begin{equation*}
    d_{F[S]}(v)=|N_F[v]\cap S|-1
  \end{equation*}
  is even.
  Hence every vertex of the forest $F[S]$ has even degree.
  If $F[S]$ contained an edge, one of its nontrivial components would have a
  vertex of degree one, a contradiction.
  Thus $S$ is independent.
\end{proof}

\begin{lemma}
  \label{lem:stableset-odd-coloring}
  Let $k\ge1$.
  If a graph $G$ has no $K_{k+2}$-minor, then $G$ has a
  $\Lambda_{\mathrm{odd}}$-selector of width $k$.
\end{lemma}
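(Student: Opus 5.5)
We argue by induction on $k$, mirroring the proof of Lemma~\ref{lem:stable-dominating}. The only change is that the base case now needs to exclude only $K_3$, with no analogue of the $Q_4=K_{1,3}$ exclusion. Since selectors combine componentwise, we may assume throughout that $G$ is connected.

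For the base case $k=1$, the graph $G$ has no $K_3$-minor, so it is a forest (indeed a tree, under connectivity). We apply Lemma~\ref{lem:forest-independent-odd-set} to get an independent set $S$ such that $|N_G[v]\cap S|$ is odd for every vertex $v$. Assigning color $1$ to all of $S$ yields a $\Lambda_{\mathrm{odd}}$-selector of width $1$: for each $x$, the unique color $1$ occurs an odd number of times in $N_G[x]$, and this number is positive, so it lies in $\Lambda_{\mathrm{odd}}$. This is exactly where parity helps. For $\Lambda_{\mathrm{cf}}$ a claw already fails, whereas odd domination exists in every forest, so no $Q$-type exclusion is needed.

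For the inductive step $k\ge2$, we take $D$, $C$, $R$ from Lemma~\ref{lem:distance-three-reduction}. If $R$ contained a $K_{k+1}$-minor, then property~(5) would give a $K_1\vee K_{k+1}=K_{k+2}$ minor in $G$, which is excluded. Hence $R$ has no $K_{k+1}$-minor, and every component of $R$ satisfies the hypothesis for $k-1$. The induction hypothesis then supplies a $\Lambda_{\mathrm{odd}}$-selector of width $k-1$ in $R$. If $R$ is empty, the trivial empty selector works vacuously. Properties~(1) and~(3) are precisely the hypotheses of Lemma~\ref{lem:selector-extension}, which lifts this to a $\Lambda_{\mathrm{odd}}$-selector of width $k$ in $G$, since $1\in\Lambda_{\mathrm{odd}}$.

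I do not expect a serious obstacle, since the lemmas already in hand do essentially all the work. The one point to check is the base case. Lemma~\ref{lem:forest-independent-odd-set} must give independence, because a selector requires $S$ independent, and that lemma provides it. We also need odd counts to be genuinely positive, which holds because $0$ is even.
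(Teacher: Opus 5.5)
Your proof is correct and is essentially the paper's argument: the base case applies Lemma~\ref{lem:forest-independent-odd-set} to the tree $G$, and the inductive step combines property~(5) of Lemma~\ref{lem:distance-three-reduction} with Lemma~\ref{lem:selector-extension}. One motivational remark is inaccurate, though it does not affect the proof: the claw itself has a width-$1$ $\Lambda_{\mathrm{cf}}$-selector (its center alone), and the trees that actually fail are ones like the double star, in which two adjacent vertices each carry two leaves.
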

\begin{proof}
  We proceed by induction on $k$.
  The assertion is trivial if $G$ is empty, and selectors combine
  componentwise, so we may assume that $G$ is connected.
  If $k=1$, then $G$ is a tree, and
  Lemma~\ref{lem:forest-independent-odd-set} gives the desired selector.

  Let $k\ge2$, and take $D,C,R$ from
  Lemma~\ref{lem:distance-three-reduction}.
  A $K_{k+1}$-minor in $R$ would extend to a
  $K_1\vee K_{k+1}=K_{k+2}$ minor in $G$.
  Hence~$R$ has no $K_{k+1}$-minor.
  The induction hypothesis gives a $\Lambda_{\mathrm{odd}}$-selector of
  width~$k-1$ in~$R$, and Lemma~\ref{lem:selector-extension} lifts it to~$G$.
\end{proof}

\section{Anchor lifting and applications}\label{sec:anchor-lifting}

We now formulate a general lifting principle from closed-neighborhood
selectors to proper colorings with open-neighborhood witnesses.
The underlying edge-contraction device appears in the proofs of
Abel~\textit{et al.}~\cite{AbAlDeFeGoHeKeSc18} and
Bhyravarapu, Kalyanasundaram, and Mathew~\cite{BhKaMa22}.
The formulation below applies it to an arbitrary set of admissible
multiplicities~$\Lambda$ and separates the selector width from the number
of colors used on the resulting minor.

\begin{lemma}\label{lem:anchor-lifting}
  Let $(S,\varphi)$ be an external $\Lambda$-selector of width $p$ in a
  graph $G$, and put $U=V(G)\setminus S$.
  For every nonisolated $s\in S$, choose an anchor $a_s\in N_G(s)$.
  Let $H$ be the simple graph obtained by contracting every edge $sa_s$ and
  deleting the isolated vertices of $S$.
  If $H$ is $q$-colorable, then $G$ has a proper coloring with at most
  $p+q$ colors such that every nonisolated vertex has a color whose
  multiplicity in its open neighborhood belongs to $\Lambda$.
\end{lemma}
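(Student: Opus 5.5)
The plan is to color $S$ with the selector colors and $U$ with the colors of $H$, using disjoint palettes. Let $c_H:V(H)\to[q]$ be a proper coloring. Since $S$ is independent, every anchor $a_s$ lies in $U$. So contracting the edges $sa_s$ produces, for each $u\in U$, the branch set
\begin{equation*}
  B_u=\{u\}\cup\{s\in S : a_s=u\}.
\end{equation*}
These branch sets partition $U$ together with the nonisolated vertices of $S$, and each of them contains exactly one vertex of $U$. We identify the vertex of $H$ obtained from $B_u$ with $u$. Define $c(s)=\varphi(s)\in[p]$ for $s\in S$, including the isolated vertices of $S$, and $c(u)=p+c_H(u)$ for $u\in U$. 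This uses at most $p+q$ colors.

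\emph{Properness.} There are no edges inside $S$. Edges between $S$ and $U$ join vertices with colors in the disjoint ranges $[p]$ and $p+[q]$. If $u,w\in U$ are adjacent in $G$, then $B_u\ne B_w$, because each branch set has only one vertex of $U$. The edge $uw$ survives the contraction, so $u$ and $w$ are adjacent in $H$, and therefore $c_H(u)\ne c_H(w)$.

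\emph{Witnesses at vertices of $U$.} Let $x\in U$ be nonisolated. The external selector property gives a color $i\in[p]$ with $|N_G[x]\cap\varphi^{-1}(i)|\in\Lambda$. Since $x\notin S$, we have $N_G[x]\cap\varphi^{-1}(i)=N_G(x)\cap\varphi^{-1}(i)$. Moreover, vertices of $U$ never receive colors in $[p]$. Hence the multiplicity of color $i$ in $N_G(x)$ under $c$ is exactly this number, which lies in $\Lambda$.

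\emph{Witnesses at vertices of $S$.} This is the step that uses the contraction, and the only place needing care. Let $s\in S$ be nonisolated; we claim that the color $c(a_s)$ occurs exactly once in $N_G(s)$, and $1\in\Lambda$. All neighbors of $s$ lie in $U$. Take any neighbor $w\in U$ with $w\ne a_s$. Then $w$ and $a_s$ are distinct vertices of $U$, so they lie in different branch sets, namely $B_w$ and $B_{a_s}\ni s$. The edge $sw$ of $G$ therefore becomes an edge between $w$ and $a_s$ in $H$, so $c_H(w)\ne c_H(a_s)$. Thus $a_s$ is the unique neighbor of $s$ with color $c(a_s)$, which proves the claim.

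The main thing to verify carefully is the structure of the branch sets: each contains exactly one vertex of $U$, so no two vertices of $U$ are merged. This is where the independence of $S$ is used. Once that is checked, both the properness and the uniqueness of the anchor's color follow directly from the edges of $H$.
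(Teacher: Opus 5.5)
Your proof is correct and follows essentially the same route as the paper's. You identify $V(H)$ with $U$ via star-shaped branch sets (using independence of $S$), color $S$ by $\varphi$ and $U$ by a shifted proper coloring of $H$, get properness from $G[U]\subseteq H$, use the selector color as the witness at vertices of $U$, and use the anchor's color at each nonisolated $s\in S$ because every other neighbor of $s$ is adjacent to $a_s$ in $H$ — your explicit branch sets $B_u$ just spell out the paper's remark that the contracted edges form disjoint stars centered in $U$.
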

\begin{proof}
  Since $S$ is independent, every anchor belongs to $U$, and the chosen
  edges form a disjoint union of stars centered at vertices of $U$.
  Thus, distinct vertices of $U$ remain distinct under the contractions, and
  $V(H)$ is naturally identified with $U$.
  Moreover,
  \begin{equation}\label{eq:anchor-properties}
    G[U]\subseteq H,
    \qquad
    {a_s}x\in E(H)
  \end{equation}
  for every nonisolated $s\in S$ and every
  $x\in N_G(s)\setminus\{a_s\}$.

  Let $\psi:U\to\{p+1,\ldots,p+q\}$ be a proper coloring of $H$, and define
  $c$ by retaining $\varphi$ on $S$ and $\psi$ on $U$.
  The coloring $c$ is proper: the set $S$ is independent, the two palettes
  are disjoint, and $G[U]\subseteq H$.

  Let $u\in U$.
  Since $u\notin S$, for each $i\in[p]$ we have
  \begin{equation*}
    N_G[u]\cap\varphi^{-1}(i)
    =
    N_G(u)\cap\varphi^{-1}(i).
  \end{equation*}
  Hence the color supplied by the selector has the same multiplicity,
  belonging to $\Lambda$, in $N_G(u)$.

  Finally, let $s\in S$ be nonisolated.
  By~\eqref{eq:anchor-properties}, every other neighbor
  $x\in N_G(s)\setminus\{a_s\}$ is adjacent to~$a_s$ in~$H$.
  Therefore $\psi(x)\ne\psi(a_s)$, so $\psi(a_s)$ occurs exactly once in
  $N_G(s)$.
  This is a valid witness because $1\in\Lambda$.
\end{proof}

\begin{corollary}\label{cor:minor-closed-transfer}
  Let $\mathcal C$ be a minor-closed class all of whose graphs are
  $q$-colorable, and let $G\in\mathcal C$.
  \begin{enumerate}
    \item If $G$ has an external $\Lambda_{\mathrm{cf}}$-selector of width~$p$, then $\chipcf(G)\le p+q$.
    \item If $G$ has an external $\Lambda_{\mathrm{odd}}$-selector of width~$p$, then $\chio(G)\le p+q$.
  \end{enumerate}
\end{corollary}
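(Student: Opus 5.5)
The corollary follows by applying Lemma~\ref{lem:anchor-lifting} directly, once we check that the contracted graph $H$ is $q$-colorable. Let $(S,\varphi)$ be the given external selector of width~$p$, with $\Lambda=\Lambda_{\mathrm{cf}}$ in the first part and $\Lambda=\Lambda_{\mathrm{odd}}$ in the second. For every nonisolated $s\in S$ choose an arbitrary anchor $a_s\in N_G(s)$. Let $H$ be the simple graph obtained by contracting the edges $sa_s$ and deleting the isolated vertices of~$S$. The only real step is to observe that $H$ is a minor of~$G$. It arises from $G$ by vertex deletions and edge contractions, with parallel edges and loops suppressed. Hence $H\in\mathcal C$ because $\mathcal C$ is minor-closed, and so $H$ is $q$-colorable.

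Lemma~\ref{lem:anchor-lifting} then yields a proper coloring of $G$ with at most $p+q$ colors in which every nonisolated vertex has a color whose multiplicity in its open neighborhood lies in~$\Lambda$. In the first part, $\Lambda_{\mathrm{cf}}=\{1\}$, so such a color occurs exactly once, and the coloring is proper conflict-free. This gives $\chipcf(G)\le p+q$. In the second part, the multiplicity is odd, so the coloring is an odd coloring and $\chio(G)\le p+q$.

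No step is a genuine obstacle. The one point to state carefully is that contracting the star-shaped family of edges $sa_s$ gives a minor. These edges are pairwise disjoint apart from shared anchor centers, so they can be contracted one at a time, and the result is the same simple graph~$H$ described in Lemma~\ref{lem:anchor-lifting}. If $\mathcal C$ is not assumed to contain the empty graph, the degenerate case where $H$ is empty is trivially $q$-colorable anyway.
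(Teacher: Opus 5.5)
Your proposal is correct and follows the paper's own proof: form the anchor graph $H$, observe that it is a minor of $G$ and hence $q$-colorable because $\mathcal C$ is minor-closed, and then apply Lemma~\ref{lem:anchor-lifting} with $\Lambda=\Lambda_{\mathrm{cf}}$ or $\Lambda=\Lambda_{\mathrm{odd}}$. Your extra remarks on contracting the star-shaped family of edges one at a time, and on the case where $H$ is empty, only spell out details the paper leaves implicit.
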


\begin{proof}
  Form the anchor graph $H$ from Lemma~\ref{lem:anchor-lifting}.
  It is a minor of $G$, and hence belongs to $\mathcal C$ and is
  $q$-colorable.
  Apply the lemma with $\Lambda=\Lambda_{\mathrm{cf}}$ or
  $\Lambda=\Lambda_{\mathrm{odd}}$, respectively.
\end{proof}

We shall use the following known cases of Hadwiger's conjecture.

\begin{theorem}[Hadwiger's conjecture for $t\le6$]
  \label{thm:known-hadwiger}
  If $2\le t\le6$, then every $K_t$-minor-free graph is
  $(t-1)$-colorable.
\end{theorem}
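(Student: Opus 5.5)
This statement is not proved from scratch here. It is a compilation of classical results, so the plan is to split on $t$ and cite or sketch the appropriate argument in each case. For $t=2$, a $K_2$-minor-free graph has no edges and is $1$-colorable. For $t=3$, a $K_3$-minor-free graph has no cycle, so it is a forest and hence $2$-colorable. For $t=4$, I would use the standard fact (Dirac) that a $K_4$-minor-free graph with at least one vertex has a vertex of degree at most $2$. Since the class is minor-closed, induction on $|V(G)|$ then gives a $3$-coloring: delete the low-degree vertex, color the rest, and reinsert it with one of the at most two forbidden colors avoided. The degeneracy fact itself follows from the series--parallel structure. Alternatively, one can argue directly: take a vertex-minimal counterexample; it is $2$-connected with minimum degree at least $3$; an ear decomposition then produces a subdivision of $K_4$.

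For $t=5$, I would reduce to the Four Color Theorem via Wagner's structure theorem. Every edge-maximal $K_5$-minor-free graph is built by clique-sums of order at most $3$ from planar graphs and copies of the Wagner graph $V_8$. Planar pieces are $4$-colorable by the Four Color Theorem, and $V_8$ is $3$-colorable. Proper colorings of the two sides of a clique-sum along a clique of size at most $3$ can be combined after permuting colors so that they agree on the shared clique. Induction on the number of summands therefore yields a $4$-coloring. Passing to an edge-maximal supergraph is harmless, because deleting edges preserves colorability.

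For $t=6$, I would invoke the theorem of Robertson, Seymour, and Thomas. They show that a minimal counterexample must be an apex graph, that is, planar after deleting one vertex. Such a graph is $5$-colorable: color the planar part with $4$ colors using the Four Color Theorem and give the apex vertex a fifth color. This is the deep step. Its reduction to the apex case occupies a long paper, and I would not attempt to reproduce it, only cite it.

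The main obstacle is therefore entirely the case $t=6$, together with the reliance on the Four Color Theorem in both $t=5$ and $t=6$. Both are used as black boxes. For the application that follows in the paper (the $K_5$-minor-free class with $q=4$), only the case $t=5$ is needed, and that case is essentially the Four Color Theorem combined with Wagner's decomposition.
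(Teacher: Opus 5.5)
Your proposal is correct and takes the same route as the paper. The paper likewise treats $t\le4$ as elementary, derives $t=5$ from Wagner's structure theorem together with the Four Color Theorem, and cites Robertson, Seymour, and Thomas for $t=6$; your degeneracy argument for $t=4$ and the clique-sum gluing for $t=5$ (using that $V_8$ is $3$-colorable) fill in details the paper leaves to its references.
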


The cases $t\le4$ are elementary.
The case $t=5$ follows from Wagner's structure theorem and the Four Color
Theorem~\cites{WagnerTheorem,ApHa77a,ApHa77b}, and the case $t=6$ is due to
Robertson, Seymour, and Thomas~\cite{RoSeTh93}.

\subsection{Proper conflict-free coloring and complete minors}

\begin{corollary}\label{cor:known-hadwiger-pcf}
  Let $3\le t\le6$.
  \begin{enumerate}
    \item Every $K_t$-minor-free graph $G$ satisfies
    $\chipcf(G) \le 2t-2$.
    \item If $G$ has neither a $K_t$-minor nor a $Q_{t+1}$-minor, then
    $\chipcf(G) \le 2t-3$.
  \end{enumerate}
\end{corollary}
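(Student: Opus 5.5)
Both parts follow from Corollary~\ref{cor:minor-closed-transfer}, applied with a selector from Section~\ref{sec:selectors} and with Theorem~\ref{thm:known-hadwiger} supplying $q=t-1$. Let $\mathcal C_t$ be the class of $K_t$-minor-free graphs. It is minor-closed, and for $3\le t\le 6$ Theorem~\ref{thm:known-hadwiger} shows that every member of $\mathcal C_t$ is $(t-1)$-colorable. Every $\Lambda_{\mathrm{cf}}$-selector is in particular an external $\Lambda_{\mathrm{cf}}$-selector. So it remains to find, in each case, a $\Lambda_{\mathrm{cf}}$-selector of the appropriate width.

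For part~(2), put $k=t-2\ge1$. Then $G$ has no $K_{k+2}=K_t$ minor and no $Q_{k+3}=Q_{t+1}$ minor. Lemma~\ref{lem:stable-dominating} therefore gives a $\Lambda_{\mathrm{cf}}$-selector of width $t-2$. Corollary~\ref{cor:minor-closed-transfer}(1) with $\mathcal C=\mathcal C_t$ then yields $\chipcf(G)\le (t-2)+(t-1)=2t-3$. It is worth checking that $\mathcal C_t$ is the right class to use: the anchor graph $H$ only needs to be $(t-1)$-colorable, so we need not also exclude $Q_{t+1}$ from it.

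For part~(1), I would use the fact that $Q_{t+2}$ contains $K_{t}$ as a subgraph. Indeed, $Q_{t+2}=K_{t-1}\vee\overline{K_3}$, and adding one vertex of the $\overline{K_3}$ to the $K_{t-1}$ gives a $K_t$. Hence a $K_t$-minor-free graph is also $Q_{t+2}$-minor-free, and of course $K_{t+1}$-minor-free. Lemma~\ref{lem:stable-dominating} with $k=t-1$ then gives a $\Lambda_{\mathrm{cf}}$-selector of width $t-1$. Corollary~\ref{cor:minor-closed-transfer}(1) gives $\chipcf(G)\le (t-1)+(t-1)=2t-2$. Alternatively, part~(1) follows from part~(2) applied with $t+1$ in place of $t$, except that this would require Hadwiger's conjecture for $t+1$, which fails to be known when $t=6$. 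So the direct argument, using $q=t-1$ from $\mathcal C_t$ itself, is the right one.

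The only point needing care is to keep the selector width and the coloring class decoupled. In part~(1) the selector comes from the weaker exclusion of $K_{t+1}$ and $Q_{t+2}$, while the coloring of $H$ uses the stronger exclusion of $K_t$. Apart from this bookkeeping and checking the small containment $K_t\subseteq Q_{t+2}$, there is no real obstacle.
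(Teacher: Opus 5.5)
Your proof is correct and follows the paper's argument: Lemma~\ref{lem:stable-dominating} with parameter $t-2$ (respectively $t-1$, using $K_t\subseteq Q_{t+2}$) supplies the selector, and Corollary~\ref{cor:minor-closed-transfer} finishes with the $(t-1)$-colorable class of $K_t$-minor-free graphs. One minor point on your aside: deriving part~(1) from part~(2) with $t+1$ in place of $t$ would only give $\chipcf(G)\le 2t-1$, so it would not recover part~(1) even if Hadwiger's conjecture were known for $t+1$.
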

\begin{proof}
  Let $\mathcal C$ be the minor-closed class of $K_t$-minor-free graphs.
  By Theorem~\ref{thm:known-hadwiger}, every graph in $\mathcal C$ is
  $(t-1)$-colorable.

  For the first assertion, observe that a $K_t$-minor-free graph has neither
  a $K_{t+1}$-minor nor a $Q_{t+2}$-minor, since $Q_{t+2}$ contains $K_t$ as
  a subgraph.
  Lemma~\ref{lem:stable-dominating}, with parameter $t-1$, gives a
  $\Lambda_{\mathrm{cf}}$-selector of width $t-1$.
  Corollary~\ref{cor:minor-closed-transfer} gives the bound
  $(t-1)+(t-1)=2t-2$.

  Under the additional exclusion of $Q_{t+1}$,
  Lemma~\ref{lem:stable-dominating}, with parameter $t-2$, instead gives a
  selector of width $t-2$.
  The resulting bound is $(t-2)+(t-1)=2t-3$.
\end{proof}

For $t=3$ and $t=4$, the bounds in
Corollary~\ref{cor:known-hadwiger-pcf} are not new and its first bound is
not best possible.
Every forest is proper conflict-free $3$-colorable~\cite{CaPeSk23}, 
while the list-coloring result of Liu~\cite{Liu24} implies that 
every $K_4$-minor-free graph is proper conflict-free $5$-colorable, 
without the additional exclusion of~$Q_5$.
Consequently, the cases of the corollary not covered by these earlier
results are $t=5$ and $t=6$.

\begin{proof}[Proof of Theorem~\ref{thm:minor-main}]
  Apply part~(2) of Corollary~\ref{cor:known-hadwiger-pcf} with $t=5$.
\end{proof}

\begin{proof}[Proof of Corollary~\ref{thm:main}]
  Let $G$ be planar.
  Since planarity is closed under taking minors, $G$ has neither a
  $K_5$-minor nor a $K_{3,3}$-minor.
  The graph $K_{3,3}$ is a spanning subgraph of $Q_6$, so $G$ also has no
  $Q_6$-minor.
  Theorem~\ref{thm:minor-main} now applies.
\end{proof}

\subsection{Odd coloring and Hadwiger's conjecture}

\begin{lemma}[Hadwiger transfer]\label{lem:hadwiger-odd-transfer}
  Let $k\ge1$, and suppose that every $K_{k+1}$-minor-free graph is
  $k$-colorable.
  Then every $K_{k+1}$-minor-free graph $G$ satisfies
  $\chio(G) \le 2k-1$.
\end{lemma}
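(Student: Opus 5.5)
We combine the parity selector with the anchor-lifting principle, exactly as Corollary~\ref{cor:known-hadwiger-pcf} did in the conflict-free setting. Let $G$ be $K_{k+1}$-minor-free, and let $\mathcal C$ denote the minor-closed class of $K_{k+1}$-minor-free graphs. By hypothesis every graph in $\mathcal C$ is $k$-colorable, so Corollary~\ref{cor:minor-closed-transfer} applies with $q=k$. What remains is to produce a $\Lambda_{\mathrm{odd}}$-selector of width $k-1$ in $G$; the resulting bound is then $(k-1)+k=2k-1$.

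For $k\ge2$, the selector comes directly from Lemma~\ref{lem:stableset-odd-coloring} applied with parameter $k-1\ge1$. That lemma requires that $G$ have no $K_{(k-1)+2}=K_{k+1}$-minor, which is exactly our assumption. It yields a $\Lambda_{\mathrm{odd}}$-selector $(S,\varphi)$ of width $k-1$. This is in particular an external selector, since the two notions coincide under the standing assumption $1\in\Lambda$. Part~(2) of Corollary~\ref{cor:minor-closed-transfer} then gives $\chio(G)\le 2k-1$.

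The only point needing separate care is the degenerate case $k=1$, where the selector width would be $0$ and Lemma~\ref{lem:stableset-odd-coloring} is not available. Here $G$ is $K_2$-minor-free, so it has no edges. Every vertex is then isolated, the odd condition is vacuous, and any single color gives a proper coloring. Hence $\chio(G)\le 1=2k-1$ (with $\chio(G)=0$ if $G$ is empty, which is still within the bound).

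I expect no real obstacle beyond this bookkeeping. One point to confirm is that the width of $\Lambda_{\mathrm{odd}}$-selectors is one less than the exclusion parameter in Lemma~\ref{lem:stableset-odd-coloring}: excluding $K_{k+1}$ corresponds to parameter $k-1$. The other is that the anchor graph $H$ from Lemma~\ref{lem:anchor-lifting} is a minor of $G$, and so lies in $\mathcal C$ and is $k$-colorable by the Hadwiger hypothesis. Both are already recorded in the proof of Corollary~\ref{cor:minor-closed-transfer}.
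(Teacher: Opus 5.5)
Your proposal is correct and matches the paper's proof: you handle $k=1$ by noting that $K_2$-minor-free graphs are edgeless, and for $k\ge2$ you apply Lemma~\ref{lem:stableset-odd-coloring} with parameter $k-1$ and then Corollary~\ref{cor:minor-closed-transfer} with $q=k$, giving $(k-1)+k=2k-1$.
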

\begin{proof}
  The case $k=1$ is immediate, since a $K_2$-minor-free graph is edgeless.
  Let $k\ge2$.
  Lemma~\ref{lem:stableset-odd-coloring}, applied with parameter $k-1$,
  gives a $\Lambda_{\mathrm{odd}}$-selector of width $k-1$.
  The $K_{k+1}$-minor-free graphs form a minor-closed class and are
  $k$-colorable by hypothesis.
  Corollary~\ref{cor:minor-closed-transfer} yields
  $\chio(G) \le (k-1)+k = 2k-1$.
\end{proof}

\begin{corollary}\label{cor:known-hadwiger-odd}
  Let $2\le t\le6$.
  Every $K_t$-minor-free graph $G$ satisfies
  \begin{equation*}
    \chio(G)\le2t-3.
  \end{equation*}
\end{corollary}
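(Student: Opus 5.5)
The plan is to derive the corollary directly from Lemma~\ref{lem:hadwiger-odd-transfer} with $k=t-1$, using Theorem~\ref{thm:known-hadwiger} to supply the required hypothesis.

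Fix $t$ with $2\le t\le6$ and put $k=t-1$, so that $1\le k\le5$.

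First, check the hypothesis of Lemma~\ref{lem:hadwiger-odd-transfer}. It asks that every $K_{k+1}$-minor-free graph be $k$-colorable. Since $K_{k+1}=K_t$ and $k=t-1$, this is exactly the statement of Theorem~\ref{thm:known-hadwiger} for our value of $t$, which lies in the admissible range $2\le t\le 6$.

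Second, apply the lemma. Lemma~\ref{lem:hadwiger-odd-transfer} then gives
\[
  \chio(G)\le 2k-1=2(t-1)-1=2t-3
\]
for every $K_t$-minor-free graph $G$.

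The boundary case $t=2$ needs a quick check, because there $k=1$. The lemma handles this case separately: a $K_2$-minor-free graph is edgeless, so every vertex is isolated. The coloring condition is then vacuous, and one color suffices, matching the bound $2t-3=1$.

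There is no real obstacle here; all the work is in the earlier results. The only point to verify is the index shift $k=t-1$, so that the range $2\le t\le 6$ corresponds exactly to the range in which Theorem~\ref{thm:known-hadwiger} is known.
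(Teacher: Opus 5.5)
Your proposal is correct and is exactly the paper's argument: you apply Lemma~\ref{lem:hadwiger-odd-transfer} with $k=t-1$, and Theorem~\ref{thm:known-hadwiger} supplies the $(t-1)$-colorability of $K_t$-minor-free graphs that the lemma requires. Your separate check of $t=2$ is harmless but not needed, since the lemma already covers $k=1$.
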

\begin{proof}
  Combine Theorem~\ref{thm:known-hadwiger} with
  Lemma~\ref{lem:hadwiger-odd-transfer}, taking $k=t-1$.
\end{proof}

The cases $t\le4$ of Corollary~\ref{cor:known-hadwiger-odd} were also previously
known.
The cases $t\le3$ follow from the corresponding results for forests, and
Cranston, Lafferty, and Song~\cite{CrLaSo23} proved that every
$K_4$-minor-free graph is odd $5$-colorable.
Thus the new complete-minor consequences of
Lemma~\ref{lem:hadwiger-odd-transfer} are the cases $t=5$ and $t=6$.

\begin{corollary}\label{cor:planar-odd-seven-from-hadwiger}
  Every planar graph $G$ satisfies
  \begin{equation*}
    \chio(G)\le7.
  \end{equation*}
\end{corollary}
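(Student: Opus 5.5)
The plan is to deduce this in one line from results already in the paper. There are two routes, and either suffices.

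\emph{Route one.} Corollary~\ref{thm:main} gives $\chipcf(G)\le 7$ for every planar $G$, and the introduction records the chain $\chio(G)\le\chipcf(G)$. A color occurring exactly once in $N_G(v)$ occurs an odd number of times, so every proper conflict-free $7$-coloring is an odd $7$-coloring.

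\emph{Route two.} The label of the corollary suggests the authors intend the Hadwiger route, which I would present as the primary argument. Let $G$ be planar. Since planarity is minor-closed and $K_5$ is not planar, $G$ has no $K_5$-minor. Apply Corollary~\ref{cor:known-hadwiger-odd} with $t=5$, which lies in the permitted range $2\le t\le 6$. This gives $\chio(G)\le 2\cdot5-3=7$.

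Unwinding route two: Lemma~\ref{lem:hadwiger-odd-transfer} with $k=4$ applies because the $K_5$ case of Hadwiger's conjecture holds by Theorem~\ref{thm:known-hadwiger}, via Wagner and the Four Color Theorem. In that lemma, Lemma~\ref{lem:stableset-odd-coloring} supplies a $\Lambda_{\mathrm{odd}}$-selector of width $3$ for the $K_5$-minor-free graph $G$. Corollary~\ref{cor:minor-closed-transfer} then combines it with a $4$-coloring of the anchor minor $H$, which is again $K_5$-minor-free.

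The only check needed is that the hypotheses align: $G$ is $K_5$-minor-free, and the class of $K_5$-minor-free graphs is minor-closed and $4$-colorable. I expect no real obstacle. The one point worth stating explicitly is that, unlike Theorem~\ref{thm:minor-main}, this route needs no exclusion of $Q_6$ (equivalently of $K_{3,3}$). Hence the parity conclusion holds for all $K_5$-minor-free graphs, and planar graphs form a special case.
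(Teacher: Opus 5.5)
Your proposal is correct and matches the paper's proof. The paper applies Corollary~\ref{cor:known-hadwiger-odd} with $t=5$ and also notes the alternative route through Corollary~\ref{thm:main} and $\chio(G)\le\chipcf(G)$, and your unwinding of the $3+4$ count is accurate. One small slip in your closing aside, which does not affect the argument: excluding $Q_6$ is not equivalent to excluding $K_{3,3}$, since $K_{3,3}$ has only $9$ of the $12$ edges of $Q_6$ and is therefore itself $Q_6$-minor-free; the correct relation is that excluding $K_{3,3}$ implies excluding $Q_6$.
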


\begin{proof}
  Every planar graph is $K_5$-minor-free.
  Apply Corollary~\ref{cor:known-hadwiger-odd} with $t=5$.
  The same conclusion also follows from Corollary~\ref{thm:main} and
  $\chio(G)\le\chipcf(G)$.
\end{proof}

\section*{Acknowledgements}

This research has been partially supported by Coordena\c{c}{\~a}o de
Aperfei\c{c}oamento de Pessoal de N{\'i}vel Superior (CAPES), Finance Code 001.
C. N. Lintzmayer was partially supported by the Conselho Nacional de
Desenvolvimento Cient{\'i}fico e Tecnol{\'o}gico (CNPq), grants 404315/2023-2
and 301143/2025-0, and by the Fundação de Amparo à Pesquisa do Estado de S\~{a}o
Paulo (FAPESP), grant 2024/16092-6.
M. Sambinelli was also partially supported by FAPESP, grant 2025/00323-1.

\section*{Statement on the use of artificial intelligence}

Generative artificial-intelligence tools were used during the development
of this work to assist with mathematical exploration, computational
experimentation, the organization and refinement of arguments, and language
editing.
Every statement and proof included in the paper was checked by the authors,
who take full responsibility for the content.

\bibliographystyle{amsplain}

\end{document}